\documentclass[a4paper,11pt]{article}
\usepackage{amsfonts,amsmath,amsthm,amssymb,pdfsync}
\usepackage[mathcal]{euscript}
\usepackage[latin1]{inputenc}
\usepackage[dvips]{graphicx}
\newtheorem{teo}{Theorem}

\newcommand\R{{\mathbb R}}
\newcommand\F{{\mathcal F}}
\newcommand\LL{{\mathcal L}}
\DeclareMathOperator{\dive}{div}

\author{Giuseppe Buttazzo  \thanks{ Dipartimento di Matematica, Largo B. Pontecorvo 5, I-56127, Pisa, Italy. Email  address:
buttazzo@dm.unipi.it.} \ \ \
Faustino Maestre \thanks{%
Departamento de Ecuaciones Diferenciales y Análisis Numérico,
Facultad de Matemáticas, Universidad de Sevilla, Tarfia s/n,
E-41012 Sevilla, Spain. Email address: fmaestre@us.es.}
}

\title{Optimal Shape for Elliptic Problems with Random Perturbations}

\begin{document}

\maketitle

\begin{abstract}
In this paper we analyze the relaxed form of a shape optimization problem with state equation
$$\left\{\begin{array}{ll}
-\dive\big(a(x)Du\big)=f\qquad\hbox{in }D\\
\hbox{boundary conditions on }\partial D.
\end{array}\right.$$
The new fact is that the term $f$ is only known up to a random
perturbation $\xi(x,\omega)$. The goal is to find an optimal
coefficient $a(x)$, fulfilling the usual constraints $\alpha\le
a\le\beta$ and $\displaystyle\int_D a(x)\,dx\le m$, which
minimizes a cost function of the form
$$\int_\Omega\int_Dj\big(x,\omega,u_a(x,\omega)\big)\,dx\,dP(\omega).$$
Some numerical examples are shown in the last section, to stress the difference with respect to the case with no perturbation.
\end{abstract}

\section{Introduction}\label{intro}

The field of shape optimization problems received in the last
years a particular attention from the mathematical community, also
in view of the many possible applications in high-tech instruments
and structures, where increasing the performances or decreasing
the weight, even by a small percentage, could be crucial. Several
books on the field have been written, exploring the various
aspects (theoretical, numerical, modelling, \dots) that intervene
in this very rich subject; we quote for instance \cite{Allaire}, \cite{sigmund}, \cite{bubu}, \cite{Cherkaev}, \cite{HP}, \cite{milton}, \cite{sozo}, \cite{Tartar_2000}.

The general framework of a shape optimization problem is the following: given a bounded domain $D$ of $\R^d$ and a given right-hand side $f$, for every subdomain $A\subset D$ a PDE
$$E_Au=f$$
is considered, with given boundary data. The PDE above produces a unique solution $u_A$ which, inserted into an integral cost function, provides the final cost
$$F(A)=\int_Dj\big(x,u_A(x),Du_A(x)\big)\,dx.$$
The shape optimization problem, under a volume constraint on the class of admissible choices, is then
$$\min\big\{F(A)\ :\ A\subset D,\ |A|\le m\big\}.$$

Due to a strong instability of the class of domains, very often an optimal shape does not exist, and the optimization problem is usually {\it relaxed} into a more treatable form, where the main unknown is the coefficient of an elliptic PDE on the whole set $D$. In the present paper we consider the simplest case, where the PDE is of a linear elliptic type
\begin{equation}\label{pde}
-\dive\big(a(x)Du\big)=f\qquad\hbox{in }D
\end{equation}
with the boundary conditions on $\partial D$  of Dirichlet type
$$u=u_0\hbox{ on }\partial D.$$

The new fact is that the right-hand side $f$ in \eqref{pde} is only known up to a random perturbation; more precisely, if $(\Omega,\F,P)$ is a probability space, we assume that
$$f(x,\omega)=f(x)+\xi(x,\omega),$$
where the random perturbation $\xi$ is such that
$$\int_\Omega\xi(x,\omega)\,dP(\omega)=0\qquad\hbox{for a.e. }x\in D.$$

There are few references in the literature of this sort of random
or stochastic optimal design problems; a general $\Gamma$-convergence framework was introduced in \cite{dmmo}, while an optimal design problem in a finite dimensional setting was considered in \cite{Alvarez-Carrasco}.

The homogenization method (see \cite{Allaire}, \cite{Tartar_2000}) and
the classical tools of non-convex variational problems (in
particular, Young measures, see \cite{Pgal_first_article}, \cite{Pgal_libro}) are the two mostly used approaches in the mathematical literature to analyze optimal design problems. We will use the Homogenization Theory in order to obtain the existence of a solution and some necessary conditions of optimality.

In the last section we consider some simple cases of loads
$f(x,\omega)$ and we perform a numerical analysis of the optimal
configurations, showing the differences between the deterministic
case $f(x)$ and the perturbed one $f(x)+\xi(x,\omega)$.

\section{The optimization problem}\label{secpb}

We consider a bounded open set $D\subset\R^d$ with a Lipschitz
boundary, two constants $\alpha$ and $\beta$ such that
$0<\alpha\le\beta$, and a given value $m\in(\alpha|D|,\beta|D|)$.

We also consider a probability space $(\Omega,\F,P)$ and a random map $f:\Omega\to L^2(D)$ that we write as
$$f(x,\omega)=f(x)+\xi(x,\omega),$$
where $\xi$ has the property
$$\int_\Omega\xi(x,\omega)\,dP(\omega)=0\qquad\hbox{for a.e. }x\in D.$$
For every coefficient $a(x)$ verifying
$$\alpha\le a(x)\le\beta,\qquad\int_Da(x)\,dx\le m$$
we consider the linear elliptic PDE
\begin{equation}\label{eqstate}
\left\{\begin{array}{ll}
-\dive\big(a(x)Du\big)=f(x,\omega)\\
u=0\hbox{ on }\partial D,
\end{array}\right.
\end{equation}
which provides a unique solution $u_a(x,\omega)$. Finally, we consider a cost functional of the form
\begin{equation}\label{cost}
F(a)=\int_\Omega\Big[\int_Dj\big(x,\omega,u_a(x,\omega)\big)\,dx\Big]\,dP(\omega)
\end{equation}
where $j(x,\omega,u)$ is measurable, l.s.c. in $u$, and such that
for suitable $c>0$ and $\Lambda\in L^1(D\times\Omega)$
$$j(x,\omega,u)\ge\Lambda(x,\omega)-c|u|^2\qquad\forall(x,\omega,u).$$
More general cost functionals, of the form
$$F(a)=\int_\Omega\Big[\int_Dj\big(x,\omega,u_a(x,\omega),Du_a(x,\omega)\big)\,dx\Big]\,dP(\omega)$$
could also be considered, but we limit ourselves to the simpler case \eqref{cost}, having in mind the energy
$$j(x,\omega,u)=-f(x,\omega)u$$
and the compliance
$$j(x,\omega,u)=f(x,\omega)u.$$
The optimization problem we consider is
\begin{equation}\label{optpb}
\min\Big\{F(a)\ :\ \alpha\le a(x)\le\beta,\ \int_Da(x)\,dx\le m\Big\}.
\end{equation}
Note that the optimal coefficient we look for is {\it deterministic}, that is it does not depend on the random variable $\omega$.

Besides to problem \eqref{optpb} we will consider, mainly for the numerical purposes, the penalized one
\begin{equation}\label{penpb}
\min\Big\{F(a)+\lambda\int_Da(x)\,dx\ :\ \alpha\le a(x)\le\beta\Big\}
\end{equation}
where $\lambda$ is the Lagrange multiplier of the mass constraint.

\section{The state equation}

The approach we follow in order to prove the existence of solution of the optimization problem \eqref{optpb} consists in checking that our problem can be seen as a relaxed optimal design problem of another auxiliary optimal design problem and from this relaxed character we deduce the existence of a solution. We focus on the Homogenization Method. Throughout this section, we denote by $\chi_n\in L^\infty(D;\{0,1\})$, $n=1,2,...$, a sequence of characteristic functions and $A_n\in M^{d\times d}$ a sequence of tensors of the form:
$$A_n(x)=\alpha\chi_n(x) I_d+\beta(1-\chi_n(x)) I_d$$
with $0<\alpha\le\beta$.

\subsection{The Homogenization Method}

The homogenization method is based on the concept of $H$-convergence (see \cite{Allaire}, \cite{murat_tartar1}, \cite{murat_tartar2}, \cite{murat_tartar3}). We say that a sequence of tensors $\{A_n\}_{n\in N}$ $H$-converges to the tensor $A_\ast\in L^\infty(D,M^{n\times n})$ if, for any $f$ such that $f(\cdot,\omega)\in H^{-1}(D)$ $P$-a.e. $\omega\in\Omega$, the sequence $\{u_n\}$ of solutions of
\begin{equation}\nonumber
\left\{
\begin{array}{rcll}
-\dive\big(A_n(x)\nabla u_n(x,\omega)\big)&=&f(x,\omega)&{\rm in}\ D\\
u_n&=&0&{\rm on}\ \partial D.
\end{array}
\right.
\end{equation}
satisfies
\begin{equation}\nonumber
\left\{\begin{array}{l}
u_n(\cdot,\omega)\rightharpoonup u(\cdot,\omega)\quad\textrm{in }
H^1_0(D),\qquad\textrm{$P$-a.e. }\omega\in\Omega\\
A_n\nabla u_n(\cdot,\omega)\rightharpoonup A_\ast\nabla
u(\cdot,\omega)\quad\textrm{in }L^2(D)^d,\qquad\textrm{$P$-a.e. }\omega\in\Omega
\end{array}\right.
\end{equation}
where $u(\cdot,\omega)$ is the solution of the homogenized equation $P$-a.e. $\omega\in\Omega$
\begin{equation}\nonumber
\left\{
\begin{array}{rcll}
-\dive\big(A_\ast(x)\nabla u(x,\omega)\big)&=&f(x,\omega)&{\rm in}\ D,\\
u&=&0&{\rm on}\ \partial D.
\end{array}
\right.
\end{equation}
We shall write $A_n\overset{H}{\longrightarrow}A_\ast$ to indicate
this kind of convergence.

We consider $A,B\in M^{d\times d}$, $\{\chi_n\}_{n\in
N}\subset L^\infty(D,\{0,1\})$ a sequence of characteristics
functions, $\{A_n\}_{n\in N}$ the sequence of matrices
\begin{equation}\nonumber\label{def:lam_homo_1}
A_n(x)=\chi_n(x)A+(1-\chi_n(x))B.
\end{equation}
We assume (which always occurs for a subsequence) that there exist
$\theta\in L^\infty(D,[0,1])$ and $A_\ast\in L^\infty(D,M^{d\times d})$
such that
\begin{equation}\nonumber\label{def:lam_homo_2}
\chi_n(x)\overset{\ast}{\rightharpoonup}\theta(x)\quad\textrm{in }L^\infty(D,[0,1])
\end{equation}
and
\begin{equation}\nonumber\label{def:lam_homo_3}
A_n\overset{H}{\longrightarrow}A_\ast.
\end{equation}
In this case $A_\ast$ is called the homogenized tensor obtained by the composition of the two phases $A$ and $B$, in proportions $\theta$ and $1-\theta$ respectively, and with the microstructure defined by the sequence $\{\chi_n\}_{n\in N}$.

In this sense the homogenized tensor $A_\ast$ is characterized by three components, the phases $A$ and $B$ and the proportion $\theta$. Therefore an important issue is to identify all possible homogenized tensors once fixed these three components, this is the so-called $G$-closure problem.

Fortunately, for the case of two isotropic matrices, the $G$-closure in the deterministic case is well known (see \cite{Allaire}, \cite{luce}, \cite{murat_tartar1}, \cite{murat_tartar3}). We will prove that our ``random'' $G$-closure remains equal to the deterministic one. We denote $G_\theta$ and $\tilde{G}_\theta$ the $G$-closure associated with the deterministic and random equations.

\begin{teo}\label{teo:carac_Gteta_autovalores}
Given $\theta\in L^\infty(D;[0,1])$ the $G$-closure of the two
isotropic tensors $\alpha I_d$ and $\beta I_d$ with proportions
$\theta$ and $(1-\theta)$ respectively, is the set of symmetric
matrices with eigenvalues $\lambda_1,\lambda_2,...,\lambda_d$ such
that,
$$\lambda_\theta^-\le\lambda_i\le\lambda_\theta^+\qquad 1\le i\le d$$
$$\sum_{i=1}^d\frac{1}{\lambda_i-\alpha}\le
\frac{1}{\lambda_\theta^--\alpha}+\frac{d-1}{\lambda_\theta^+-\alpha}$$
$$\sum_{i=1}^d\frac{1}{\beta-\lambda_i}\le\frac{1}{\beta-\lambda_\theta^-}+\frac{d-1}{\beta-\lambda_\theta^+}$$
where $\lambda_\theta^+$ and $\lambda_\theta^-$ are the arithmetic
and harmonic means of $\alpha$ and $\beta$ with proportions $\theta$,
$$\lambda_\theta^-=\Big(\frac{\theta}{\alpha}+\frac{1-\theta}{\beta}\Big)^{-1}
\qquad\textrm{and}\qquad\lambda_\theta^+=\theta\alpha+(1-\theta)\beta.$$
\end{teo}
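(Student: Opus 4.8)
The plan is to show that the random perturbation is irrelevant at the level of the $G$-closure, so that Theorem~\ref{teo:carac_Gteta_autovalores} reduces to the classical deterministic characterization. The crucial point is that the admissible tensors $A_n(x)=\alpha\chi_n(x)I_d+\beta(1-\chi_n(x))I_d$ carry no dependence on $\omega$; only the right-hand side $f(x,\omega)$ does. So I would first prove that the ``random'' $H$-convergence recalled above is equivalent to ordinary $H$-convergence of $\{A_n\}$, which immediately gives $\tilde G_\theta=G_\theta$, and then quote the deterministic result.

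For $\tilde G_\theta\subseteq G_\theta$: assuming $A_n\overset{H}{\longrightarrow}A_\ast$ in the random sense and $\chi_n\overset{\ast}{\rightharpoonup}\theta$, I would test the definition against the constant-in-$\omega$ field $f(x,\omega)\equiv g(x)$ for an arbitrary $g\in H^{-1}(D)$ (here $\xi\equiv 0$, so the zero-mean condition holds trivially); the asserted convergences of $u_n(\cdot,\omega)$ and $A_n\nabla u_n(\cdot,\omega)$ then become exactly the deterministic $H$-convergence statements for the load $g$. As $g$ is arbitrary, $A_n\overset{H}{\longrightarrow}A_\ast$ deterministically, hence $A_\ast\in G_\theta$. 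For the reverse inclusion $G_\theta\subseteq\tilde G_\theta$: given a sequence $\{\chi_n\}$ realizing $A_\ast\in G_\theta$ and any random load with $f(\cdot,\omega)\in H^{-1}(D)$ for $P$-a.e.\ $\omega$, I would apply deterministic $H$-convergence with $\omega$ frozen, obtaining the two weak convergences for $P$-a.e.\ $\omega$; hence $A_\ast\in\tilde G_\theta$. This settles $\tilde G_\theta=G_\theta$.

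It then remains to identify $G_\theta$ explicitly, and here I would simply invoke the classical result on mixing two isotropic phases $\alpha I_d$ and $\beta I_d$ with volume fractions $\theta$ and $1-\theta$: the homogenized tensor can be taken symmetric, its eigenvalues must lie in $[\lambda_\theta^-,\lambda_\theta^+]$ and must satisfy the two Hashin--Shtrikman--type trace inequalities of the statement, and conversely every such matrix is attained, the extreme configurations being sequential rank-$d$ laminates. This is exactly the content of the deterministic $G$-closure theorem for two isotropic conductors, for which I would cite \cite{Allaire}, \cite{luce}, \cite{murat_tartar1}, \cite{murat_tartar3}.

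The hard part --- were one to seek a self-contained proof rather than citing the deterministic theory --- is precisely this last identification: necessity of the two trace inequalities requires the translation (compensated-compactness) method, or equivalently the Hashin--Shtrikman variational principle applied to the corrector equations, while sufficiency requires the explicit laminate constructions attaining each boundary point together with a lamination/convexity argument to fill in the interior. In the present setting none of this has to be redone: once the two notions of $H$-convergence are identified, the entire deterministic machinery applies verbatim, which is the whole point of the reduction above.
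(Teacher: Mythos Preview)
Your proposal is correct and follows the same overall strategy as the paper: reduce to the deterministic $G$-closure by proving $\tilde G_\theta=G_\theta$, then cite the classical characterization from \cite{Allaire,luce,murat_tartar1,murat_tartar3}. The inclusion $G_\theta\subseteq\tilde G_\theta$ is handled identically in both, by freezing $\omega$.

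The one noteworthy difference is in the reverse inclusion $\tilde G_\theta\subseteq G_\theta$. You specialize the random definition to $\omega$-independent loads $f(x,\omega)\equiv g(x)$, which immediately yields deterministic $H$-convergence for every $g\in H^{-1}(D)$. The paper instead starts from a general random load $\tilde f(x,\omega)$, integrates the state equations and the weak convergences over $\Omega$, and recovers the deterministic problem for $f(x)=\int_\Omega\tilde f(x,\omega)\,dP(\omega)$. Your route is shorter and avoids the (easy but unstated) justification that $P$-a.e.\ weak convergence in $H^1_0(D)$ and $L^2(D)^d$ can be integrated in $\omega$, which relies on the uniform $H^1_0$ bounds coming from ellipticity. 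Both arguments are valid; yours is the more direct of the two.
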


\begin{proof} We will prove that $G_\theta=\tilde{G}_\theta$.

We start proving that $G_\theta\subset\tilde{G}_\theta$ and we
consider $A_\ast\in G_\theta$. Therefore there exists a sequence
of matrices $\{A_n\}_{n\in N}$ of the form
$\alpha\chi_nI_d+\beta(1-\chi_n)I_d$ with
$\chi_n\overset{\ast}{\rightharpoonup}\theta$, such that for every
right-hand side $f(x)$ the solutions $u_n$ of
$$\left\{
\begin{array}{rcll}
-\dive\big(A_n(x)\nabla u_n(x)\big)&=&f(x)&\textrm{in }D\\
u_n&=&0&\textrm{on }\partial D
\end{array}\right.$$
satisfy
$$\left\{
\begin{array}{l}
u_n\rightharpoonup u\quad\textrm{in }H^1_0(D),\\
A_n\nabla u_n\rightharpoonup A_\ast\nabla u\quad\textrm{in }L^2(D)^d,
\end{array}\right.$$
where $u$ is the solution of the homogenized equation
$$\left\{
\begin{array}{rcll}
-\dive\big(A_\ast(x)\nabla u(x)\big)&=&f(x)&\textrm{in }D\\
u&=&0&\textrm{on }\partial D.
\end{array}\right.$$
It is enough to observe that when $f(x)$ is replaced by $f(x,\omega)$ the above convergence holds $P$-a.e. $\omega\in\Omega$, and therefore $A_\ast\in\tilde{G}_\theta$.

We prove now that $\tilde{G}_\theta\subset G_\theta$ and we take $A_\ast\in\tilde{G}_\theta$. Therefore there exists a sequence of matrices $\{A_n\}_{n\in N}$ of the form $\alpha\chi_nI+\beta(1-\chi_n)I$ with $\chi_n\overset{\ast}{\rightharpoonup}\theta$, such that for every right-hand side $\tilde{f}(x,\omega)$ the solutions $\tilde{u}_n$ of
\begin{equation}\label{eq:ran1}
\left\{
\begin{array}{rcll}
-\dive\big(A_n(x)\nabla\tilde{u}_n(x,\omega)\big)&=&\tilde{f}(x,\omega)&\textrm{in }D\\
\tilde{u}_n&=&0&\textrm{on }\partial D
\end{array}
\right.
\end{equation}
satisfy
\begin{equation}\label{eq:ran2}
\left\{
\begin{array}{l}
\tilde{u}_n(\cdot,\omega)\rightharpoonup\tilde{u}(\cdot,\omega)
\quad\textrm{in }H^1_0(D),\qquad\textrm{$P$-a.e. }\omega\in\Omega\\
A_n\nabla\tilde{u}_n(\cdot,\omega)\rightharpoonup
A_\ast\nabla\tilde{u}(\cdot,\omega)\quad\textrm{in }L^2(D)^d,\qquad\textrm{$P$-a.e. }\omega\in\Omega
\end{array}
\right.
\end{equation}
where $\tilde{u}(\cdot,\omega)$ is the solution of the homogenized
equation $P$-a.e. $\omega\in\Omega$
\begin{equation}\label{eq:ran3}
\left\{
\begin{array}{rcll}
-\dive\big(A_\ast(x)\nabla \tilde{u}(x,\omega)\big)&=&\tilde{f}(x,\omega)&\textrm{in }D\\
\tilde{u}&=&0&\textrm{on }\partial D.
\end{array}
\right.
\end{equation}
Integrating the above expressions \eqref{eq:ran1}, \eqref{eq:ran2}, \eqref{eq:ran3} with respect to the random variable $\omega\in\Omega$ and setting
$$u(x)=\int_\Omega\tilde{u}(x,\omega)\,dP(\omega),\qquad
u_n(x)=\int_\Omega\tilde{u}_n(x,\omega)\,dP(\omega),$$
and $\displaystyle f(x)=\int_\Omega\tilde{f}(x,\omega)\,dP(\omega)$, one has
$$\left\{
\begin{array}{rcll}
-\dive\big(A_n(x)\nabla u_n(x)\big)&=&f(x)&\textrm{in }D\\
u_n&=&0&\textrm{on }\partial D.
\end{array}
\right.$$
and
$$\left\{
\begin{array}{l}
u_n\rightharpoonup u\quad\textrm{in }H^1_0(D),\\
A_n\nabla u_n\rightharpoonup A_\ast\nabla u\quad\textrm{in }L^2(D)^d,
\end{array}
\right.$$
where $u$ is the solution of the homogenized equation
$$\left\{
\begin{array}{rcll}
-\dive\big(A_\ast(x)\nabla u(x)\big)&=&f(x)&\textrm{in }D\\
u&=&0&\textrm{on }\partial D.
\end{array}
\right.$$
>From the generality of $\tilde{f}$ we can deduce that $A_\ast\in G_\theta$. \end{proof}

\subsection{Relaxation}

We now consider the classical optimal design problem

$$(O_c)\qquad\min I(\chi)=\int_\Omega\Big[\int_D
j(x,\omega,u_\chi(x,\omega))\,dx\Big]\,dP(\omega)$$
subject to
$$\begin{array}{c}
\chi\in L^\infty(\Omega;\{0,1\}),\textrm{ with }A=\alpha I_d\chi+\beta I_d(1-\chi),\\
-\dive\big(A(x)\nabla u(x,\omega)\big)=f(x,\omega)\quad\textrm{in }D,\\
u=0\quad\textrm{on }\partial D,
\end{array}$$
$P$-a.e. $\omega\in\Omega$, and to the volume constraint
$$\int_D\chi(x)\,dx\le L,$$
with $L\in(0,|D|)$.

The lack of optimal solutions for problems of the type ($O_c$) is well known even in the deterministic case (see \cite{Murat}).

The basic idea for the relaxation process consists in considering a larger class of admissible designs, in order a new (relaxed) problem on this larger class admits optimal solutions. Having in mind the above Theorem \ref{teo:carac_Gteta_autovalores}, we consider the space of generalized designs
$$\mathcal{GD}=\Big\{(\theta,A_\ast)\in L^\infty(D,[0,1])\times M^{d\times d}
\ :\ A_\ast\in G_{\theta(x)}\textrm{ a.e. }x\in D\Big\}.$$
Therefore we define the relaxed version ($O_r$) of the above optimal design problem as
$$(O_r)\qquad\min I(\theta,A_\ast)=\int_\Omega\Big[\int_D
j(x,\omega,u(x,\omega))\,dx\Big]\,dP(\omega)$$
subject to
\begin{equation}
\label{eq:estado}
\begin{array}{c}
\theta\in L^\infty(D;[0,1]),\textrm{ with }A_\ast\in G_\theta,\\
-\dive\big(A_\ast(x)\nabla u(x,\omega)\big)=f(x,\omega)\quad\textrm{in }D,\\
u=0\quad\textrm{on }\partial D,
\end{array}
\end{equation}
$P$-a.e. $\omega\in\Omega$, and the volume constraint
$$\int_D\theta(x)\,dx\le L.$$

\begin{teo}
($O_r$) is a relaxation of ($O_c$) in the sense that
\begin{enumerate}
\item the infima of both problems coincide
\item there are optimal solutions for the relaxed problem ($O_r$).
\end{enumerate}
\end{teo}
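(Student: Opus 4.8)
The plan is to obtain statement~2 from a compactness plus lower-semicontinuity argument in the class $\mathcal{GD}$ of generalized designs, and statement~1 from the inclusion of the classical designs in $\mathcal{GD}$ together with a recovery-sequence argument; the only genuinely random ingredient will be an integration in $\omega$, exactly as in the proof of Theorem~\ref{teo:carac_Gteta_autovalores}. Throughout I use that $f\in L^2(D\times\Omega)$ (which is needed already for $F$ to be finite): by the energy estimate and Poincar\'e's inequality, the state $u$ of any admissible design then satisfies $\int_\Omega\|u(\cdot,\omega)\|_{H^1_0(D)}^2\,dP(\omega)\le C\|f\|_{L^2(D\times\Omega)}^2$, with $C$ depending only on $\alpha$ and $D$.

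To prove statement~2, let $(\theta_n,A_{\ast,n})$ be a minimizing sequence for $(O_r)$. Since $0\le\theta_n\le1$ one may assume $\theta_n\overset{\ast}{\rightharpoonup}\theta$ in $L^\infty(D;[0,1])$, with $\int_D\theta\,dx\le L$, and since every $A_{\ast,n}$ has eigenvalues in $[\alpha,\beta]$ the compactness of $H$-convergence lets one also assume $A_{\ast,n}\overset{H}{\longrightarrow}A_\ast$; the key point is that then $(\theta,A_\ast)\in\mathcal{GD}$, i.e.\ the constraint $A_\ast\in G_\theta$ passes to the limit. By the definition of $H$-convergence, for $P$-a.e.\ $\omega$ the states $u_n(\cdot,\omega)$ of \eqref{eq:estado} converge weakly in $H^1_0(D)$, hence strongly in $L^2(D)$, to the state $u(\cdot,\omega)$ associated with $A_\ast$. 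Fixing such an $\omega$, passing to an a.e.-in-$D$ convergent subsequence, using the l.s.c.\ of $j$ in its last variable together with $j(x,\omega,u_n)+c|u_n|^2-\Lambda(x,\omega)\ge0$ and Fatou's lemma, one gets
$$\liminf_n\int_D\big(j(x,\omega,u_n)+c|u_n|^2\big)\,dx\ge\int_D\big(j(x,\omega,u)+c|u|^2\big)\,dx,$$
and since $\int_D|u_n(\cdot,\omega)|^2\,dx\to\int_D|u(\cdot,\omega)|^2\,dx$ this yields $\liminf_n\int_Dj(x,\omega,u_n)\,dx\ge\int_Dj(x,\omega,u)\,dx$ for $P$-a.e.\ $\omega$ (a routine subsequence argument upgrades this to the full sequence). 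As $\int_Dj(x,\omega,u_n(x,\omega))\,dx\ge\int_D\Lambda(x,\omega)\,dx-cC\|f(\cdot,\omega)\|_{L^2(D)}^2\in L^1(\Omega)$, a second application of Fatou's lemma in $\omega$ gives $\liminf_n I(\theta_n,A_{\ast,n})\ge I(\theta,A_\ast)$, so $(\theta,A_\ast)$ is optimal for $(O_r)$.

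For statement~1, the inequality $\inf(O_r)\le\inf(O_c)$ is immediate: any $\chi\in L^\infty(D;\{0,1\})$ with $\int_D\chi\,dx\le L$ gives the generalized design $\big(\chi,\alpha\chi I_d+\beta(1-\chi)I_d\big)\in\mathcal{GD}$ (by Theorem~\ref{teo:carac_Gteta_autovalores}, $G_1=\{\alpha I_d\}$ and $G_0=\{\beta I_d\}$) with the same volume and the same cost. For the reverse inequality I would fix an admissible $(\theta,A_\ast)$ for $(O_r)$ and use the definition of the $G$-closure — which by Theorem~\ref{teo:carac_Gteta_autovalores} is the same in the deterministic and in the random setting — to produce $\chi_n\overset{\ast}{\rightharpoonup}\theta$ with $\alpha\chi_nI_d+\beta(1-\chi_n)I_d\overset{H}{\longrightarrow}A_\ast$; then $u_n(\cdot,\omega)\to u(\cdot,\omega)$ in $L^2(D)$ for $P$-a.e.\ $\omega$, and the uniform bound above plus dominated convergence give $u_n\to u$ in $L^2(D\times\Omega)$. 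For the functionals we have in mind — the energy $j=-fu$ and the compliance $j=fu$, and more generally any Carath\'eodory integrand with two-sided quadratic growth — this forces $I(\chi_n)\to I(\theta,A_\ast)$. Since $\int_D\chi_n\,dx\to\int_D\theta\,dx\le L$, the $\chi_n$ are admissible for $(O_c)$ for $n$ large when $\int_D\theta\,dx<L$, and when $\int_D\theta\,dx=L$ a routine adjustment of $\chi_n$ on a subset of vanishing measure — which affects neither the $H$-limit nor the limit of the cost — restores admissibility. Hence $\inf(O_c)\le\lim_nI(\chi_n)=I(\theta,A_\ast)$, and taking the infimum over $(\theta,A_\ast)$ gives $\inf(O_c)\le\inf(O_r)$.

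The main obstacle is the closedness of the constraint $A_\ast\in G_{\theta(x)}$ under the joint weak-$\ast$/$H$-convergence used above (equivalently, the continuity of $\theta\mapsto G_\theta$): this is where the explicit characterization of Theorem~\ref{teo:carac_Gteta_autovalores} is really needed — the eigenvalue bounds and the two trace-type inequalities must be shown stable under $H$-convergence combined with weak-$\ast$ convergence of the proportions — and it is also the point where one verifies that nothing is lost in passing from the deterministic to the random framework, again by integrating in $\omega$. A secondary difficulty is that for a merely l.s.c.\ $j$ the recovery step only yields $\liminf_n I(\chi_n)\ge I(\theta,A_\ast)$, so the continuity of $j$ in $u$ (enjoyed by the energy and the compliance) is genuinely used to get the equality of the infima.
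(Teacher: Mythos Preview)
Your plan is sound and is in fact more than the paper provides: the paper's own ``proof'' consists of a bare reference to \cite{Allaire}, Section~3.2, Theorem~3.2.1, with no further argument. What you outline --- sequential compactness of $\mathcal{GD}$ under joint weak-$\ast$/$H$-convergence, lower semicontinuity of the cost via Fatou (first in $x$, then in $\omega$), and a recovery sequence built from the very definition of the $G$-closure --- is precisely the standard machinery behind that reference, and you have correctly isolated the one place where the random parameter $\omega$ actually enters (the uniform energy bound $\int_\Omega\|u(\cdot,\omega)\|_{H^1_0}^2\,dP\le C\|f\|_{L^2(D\times\Omega)}^2$, which turns the $P$-a.e.\ convergence of states into $L^2(D\times\Omega)$ convergence).

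Your two flagged difficulties are the right ones. The closedness of $\{(\theta,A_\ast):A_\ast\in G_{\theta(x)}\}$ is a nontrivial fact from homogenization theory (stability of the $G$-closure bounds under $H$-convergence combined with weak-$\ast$ convergence of the volume fractions), and the paper takes it for granted by citing \cite{Allaire}. Your second observation is a genuine point the paper glosses over: with $j$ merely l.s.c.\ in $u$, the recovery step only gives $\liminf_n I(\chi_n)\ge I(\theta,A_\ast)$, so the equality of infima as stated really needs the extra continuity (or an upper quadratic bound) that the compliance and the energy enjoy. This is not a defect of your argument but of the paper's hypotheses; for the integrands the paper actually cares about, your proof goes through.
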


\begin{proof}
See for instance \cite{Allaire} Section 3.2, Theorem 3.2.1.
\end{proof}

\section{Optimal solutions}

In this section we consider the above problems ($O_c$) and ($O_r$) in the special case when the cost functionals are either the {\it compliance}
$$j(x,\omega,u)=f(x,\omega)u$$
or the {\it energy}
$$j(x,\omega,u)=-f(x,\omega)u$$
and we will prove that in these situations our original design problem
$$(O)\qquad\min I(a)=\int_\Omega\int_Dj(x,\omega,u_a(x,\omega))dx\,dP$$
subject to,
$$\begin{array}{c}
a\in L^\infty(D),\textrm{ with }\alpha\le a\le\beta,\\
-\dive\big(a(x)\nabla u(x)\big)=f\quad\textrm{in }D,\\
u=0\quad\textrm{on }\partial D,\\
\int_D a(x)\,dx=m
\end{array}$$
admits optimal solutions.

\begin{teo}\label{exist}
In the cases either of the compliance or of the energy the optimization problem \eqref{optpb} admits a solution.
\end{teo}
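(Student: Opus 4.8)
The plan is to recast \eqref{optpb}, for each of the two special cost functionals, as the minimization of a \emph{convex} functional on a larger set of variables, and then apply the direct method. This detour is needed because the solution map $a\mapsto u_a$ is not continuous for the weak-$*$ topology of $L^\infty(D)$ — which is exactly why the deterministic problem has to be relaxed — so the direct method cannot be used on $F$ regarded as a function of $a$ alone.

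For the compliance $j(x,\omega,u)=f(x,\omega)u$, the complementary energy (dual) principle gives, for $P$-a.e.\ $\omega$,
$$\int_D f(\cdot,\omega)\,u_a(\cdot,\omega)\,dx=\min\Big\{\int_D\frac{|\sigma|^2}{a}\,dx\ :\ \sigma\in L^2(D)^d,\ -\dive\sigma=f(\cdot,\omega)\Big\},$$
the minimum being attained at $\sigma=a\nabla u_a(\cdot,\omega)$. Integrating over $\Omega$ and choosing the minimizer measurably in $\omega$, \eqref{cost} becomes
$$F(a)=\min\Big\{\int_\Omega\!\int_D\frac{|\sigma(x,\omega)|^2}{a(x)}\,dx\,dP(\omega)\ :\ \sigma\in L^2(D\times\Omega)^d,\ -\dive_x\sigma(\cdot,\omega)=f(\cdot,\omega)\ P\hbox{-a.e.}\Big\},$$
so that \eqref{optpb} turns into the joint minimization of $\Phi(a,\sigma)=\int_\Omega\int_D|\sigma|^2/a$ over the product of $\big\{\alpha\le a\le\beta,\ \int_Da\le m\big\}$ and $\big\{-\dive_x\sigma(\cdot,\omega)=f(\cdot,\omega)\ P\hbox{-a.e.}\big\}$. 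Now $(t,\sigma)\mapsto|\sigma|^2/t$ is jointly convex on $(0,+\infty)\times\R^d$, hence $\Phi$ is convex; being convex and strongly lower semicontinuous, it is lower semicontinuous for the product of the weak-$*$ topology of $L^\infty(D)$ and the weak topology of $L^2(D\times\Omega)^d$. Moreover the admissible set for $a$ is convex and weak-$*$ compact, the admissible set for $\sigma$ is a nonempty closed affine subspace, and $\Phi(a,\sigma)\ge\beta^{-1}\|\sigma\|^2_{L^2}$ gives coercivity in $\sigma$. A minimizing sequence $(a_n,\sigma_n)$ thus has, along a subsequence, $a_n\overset{\ast}{\rightharpoonup}\bar a$ in $L^\infty(D)$ and $\sigma_n\rightharpoonup\bar\sigma$ in $L^2(D\times\Omega)^d$; the divergence constraint passes to the limit, lower semicontinuity shows $(\bar a,\bar\sigma)$ is optimal, and $\bar a$ solves \eqref{optpb}.

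For the energy $j(x,\omega,u)=-f(x,\omega)u$ one argues dually, via the primal (Dirichlet) principle $\int_D f u_a=\max_{v\in H^1_0(D)}\big(2\int_D fv-\int_D a|\nabla v|^2\big)$, which turns $F$ into $2\min\{\int_\Omega\int_D(\tfrac12 a|\nabla v|^2-fv):\ v(\cdot,\omega)\in H^1_0(D)\}$, again a joint minimization; moreover, since $\int_D f u_a=\min_\sigma\int_D|\sigma|^2/a$ is nonincreasing in $a$, the map $a\mapsto F(a)$ is nondecreasing, so when the constraint is $\int_D a\le m$ with $m>\alpha|D|$ the constant $a\equiv\alpha$ is already optimal. (The same monotone/extremal structure is the ``relaxed'' content alluded to in the introduction: by Theorem \ref{teo:carac_Gteta_autovalores} and the elementary identities $\min_{A_*\in G_\theta}A_*^{-1}\sigma\cdot\sigma=|\sigma|^2/\lambda_\theta^+$ and $\min_{A_*\in G_\theta}A_*\xi\cdot\xi=\lambda_\theta^-|\xi|^2$ — each valid for a fixed direction, the extremal mixtures being simple laminates — the scalar coefficients $\lambda_\theta^+$ and $\lambda_\theta^-$ realise, for a single realisation $\omega$, the values of the relaxed two-phase compliance and energy problems.) The hard part is therefore not the direct method itself, which is routine once the problem is convex and the admissible sets are compact, but the choice of the right variational principle — dual for the compliance, primal for the energy — that makes the enlarged functional convex in $(a,\sigma)$, resp.\ $(a,v)$; the joint convexity of $|\sigma|^2/a$ is what makes the compliance case work. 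Checking that the weak limits remain in the convex, weak-$*$ closed admissible set for $a$, that the affine constraint on $\sigma$ (resp.\ the condition $v(\cdot,\omega)\in H^1_0(D)$) is stable under weak convergence, and that $j\ge\Lambda-c|u|^2$ with $\Lambda\in L^1(D\times\Omega)$ keeps $F$ well defined and bounded below, is then elementary.
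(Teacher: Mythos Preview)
Your argument is correct and takes a genuinely different route from the paper. The paper proceeds via homogenization: it embeds the scalar problem into the two--phase matrix problem $(O_c)$, passes to its relaxation $(O_r)$ on the $G$--closure $G_\theta$, and then analyzes the first--order optimality condition in the matrix variable to show that at an optimum $M_\ast\nabla u=\lambda_\theta\nabla u$ (with $\lambda_\theta=\lambda_\theta^+$ for the compliance, $\lambda_\theta^-$ for the energy), so that the scalar coefficient $a=\lambda_\theta$ already realizes the relaxed infimum. You instead bypass homogenization entirely: for the compliance you invoke the complementary--energy principle and exploit the joint convexity of $(t,\sigma)\mapsto|\sigma|^2/t$ to run the direct method on the pair $(a,\sigma)$; for the energy you observe that $a\mapsto F(a)$ is monotone, so $a\equiv\alpha$ is optimal under the inequality constraint $\int_D a\le m$.

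Two remarks. First, your passing suggestion that for the energy ``one argues dually'' via the primal Dirichlet principle does \emph{not} parallel the compliance argument: the integrand $a|\nabla v|^2$ is \emph{not} jointly convex in $(a,\nabla v)$ (the Hessian of $t|\xi|^2$ is indefinite), so joint weak lower semicontinuity fails and the direct method on $(a,v)$ would not close. Your monotonicity observation is what actually carries the energy case, and it is worth saying so explicitly. Second, that monotonicity renders the energy problem trivial only because \eqref{optpb} is stated with $\int_D a\le m$; the paper's problem $(O)$ and the numerics use the equality constraint $\int_D a=m$, for which neither monotonicity nor the naive primal argument suffices, and the paper's optimality--condition route (or a separate argument) is genuinely needed. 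What your approach buys is an elementary, self--contained existence proof for the compliance that avoids $H$--convergence and $G$--closure altogether; what the paper's approach buys is structural information (the optimal effective tensor is a rank--one laminate with volume fraction $\lambda_\theta^\pm$), a uniform treatment of the equality--constrained case, and the link to the relaxed two--phase design problem that motivates the whole setup.
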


We know that the $(O_r)$ problem admits optimal solution (since is the relaxed problem of $(O_c)$), we check that these optimal solutions are solutions of our problem $(O)$ from which we deduce the well-posed character of our problem.

We analyze the optimality condition for $(O_r)$ for the matrix $A_\ast$. We denote by $p$ the adjoint state, which is the unique solution in $H^1_0(D)$ $P$-a.e. $\omega\in\Omega$ of the adjoint state equation
\begin{equation}\label{eq:coestado_relax_matrix}
\left\{\begin{array}{l}
\displaystyle-\dive\big(A_\ast(x)\nabla p(x,\omega)\big)=\frac{\partial j(x,\omega,u)}{\partial u}\quad\textrm{in }D,\\
p=0\quad\textrm{on }\partial D,
\end{array}\right.
\end{equation}
We remark that from \eqref{eq:coestado_relax_matrix} it follows that in the compliance case we have $p=u$ and in the energy case we have $p=-u$.

We fix the density $\theta\in L^\infty(D;[0,1])$ and we introduce the Lagrangian
\begin{eqnarray}\nonumber
\LL(M,\phi,\psi)=
\int_\Omega\int_Dj(x,\omega,\phi(x,\omega))\,dx\,dP(\omega)+\\
\int_\Omega\int_D\Big[\dive\big(M(x)\nabla\phi(x,\omega)\big)+
f(x,\omega)\Big]\psi\,dx\,dP(\omega)
\nonumber\end{eqnarray}
for any $M\in L^\infty(D;M^{d\times d})$ and $\phi,\psi\in H^1_0(D)$ $P$-a.e. $\omega\in\Omega$. We compute the partial derivative for any variable.

It is clear that $\big\langle\frac{\partial}{\partial\psi}\LL(M,\phi,\psi);\psi_1\big\rangle=0$ taking $\phi=u$ solution of \eqref{eq:estado}. We then
determine the solution $p$ so that, for all $\phi_1\in H^1(D)$ $P$-a.e. $\omega\in\Omega$, we have
$$\Big\langle\frac{\partial }{\partial\phi}\LL( M,\phi,p);
\phi_1\Big\rangle=0,$$
which leads to the formulation of the adjoint problem \eqref{eq:coestado_relax_matrix}.

Finally, from $I(\cdot,M)=\LL(M,u,p)$ it is easy to compute
$$\Big\langle\frac{\partial}{\partial M}\LL(M,u,p);M_1\Big\rangle
=-\int_\Omega\int_D M_1\nabla u\cdot\nabla p\,dx\,dP(\omega).$$
Therefore it is easy to deduce that our cost functional is G\^{a}teaux differentiable with respect to the matrix variable and its derivative in the direction $M_1$ is given by the above formula with $u$ and $p$ solutions of the state and adjoint equation respectively. Hence if $M_\ast$ if optimal, the optimality condition
$$\Big\langle\frac{\partial}{\partial M}
\LL(M_\ast,u,p);M-M_\ast\Big\rangle\ge 0\qquad\forall M\in G_\theta$$
becomes
\begin{equation}\label{def:opt_condi_punctual}
-\int_D\int_\Omega(M-M_\ast)\nabla u\cdot\nabla p\,dP(\omega)\,dx\ge0
\qquad\forall M\in G_\theta.
\end{equation}

>From the optimality condition \eqref{def:opt_condi_punctual} we obtain
\begin{equation}\label{def:opt_condi_max}
\int_D\int_\Omega M_\ast\nabla u\cdot\nabla
p\,dP(\omega)\,dx=\max_{M\in G_\theta}\int_D\int_\Omega M\nabla
u\cdot\nabla p\,dP(\omega)\,dx.
\end{equation}

Using the algebraic expression
$$4 M y\cdot z= M(z+y)\cdot(z+y)-M(z-y)\cdot(z-y),$$
we have that for any $M\in G_\theta$
$$4M\nabla u\cdot\nabla p\le
\max_{A\in G_\theta}A(\nabla u+\nabla p)\cdot(\nabla u+\nabla p)
-\min_{A\in G_\theta}A(\nabla u-\nabla p)\cdot(\nabla u-\nabla p)$$
and using the identification of $G_\theta$ we obtain
$$4M\nabla u\cdot\nabla p\le
\lambda^+_\theta|\nabla u+\nabla p|^2-\lambda^-_\theta|\nabla u-\nabla p|^2.$$
Having in mind that in our problem $u=p$ for the compliance and $u=-p$ for the energy, the necessary condition above reads
\begin{equation}\label{def:opt_condi_lambda}
\int_D\int_\Omega M_\ast\nabla u\cdot \nabla u\,dP(\omega)\,dx=
\int_D\int_\Omega\lambda_\theta|\nabla u|^2\,dP(\omega)\,dx
\end{equation}
with $\lambda_\theta=\lambda^+_\theta$ for the compliance and
$\lambda_\theta=\lambda^-_\theta$ for the energy.

If we analyze the optimality condition \eqref{def:opt_condi_lambda} we obtain that $\lambda_\theta$ is an eigenvalue of $M_\ast$ and $\nabla u(x,\omega)$ is an eigenvector $P$-a.e $\omega\in\Omega$, i.e.,
\begin{equation}\label{pro:optimo}
M_\ast\nabla u=\lambda_\theta\nabla u\qquad\textrm{$P$-a.e. }\omega\in\Omega
\end{equation}
where $u$ is the solution of the state equation.


For the compliance case $u=p$ and
$$
M_\ast\nabla u\cdot\nabla u=
\lambda^+_\theta|\nabla u|^2,
$$
there exist several matrices in
$G_\theta$ with this property, and it is enough to take a rank one
laminate with normal direction of lamination $\vec{n}$ orthogonal
to $\nabla u$ and the optimal volume fraction $\lambda^+_\theta$ .

For the energy case $u=-p$ and
$$
M_\ast\nabla u\cdot\nabla u
=\lambda^-_\theta|\nabla u|^2,
$$
there exist an unique matrix in $G_\theta$ with this property
which corresponds to a rank one laminate with normal direction of
lamination $\vec{n}$ parallel to $\nabla u$ and the optimal volume
fraction $\lambda^-_\theta$.

We would like to stress that for any case the optimal matrix
$M_\ast$ is a first order laminate with deterministic optimal
volume fraction $\lambda_\theta$ and random direction of
lamination according with the random value of $\nabla u$.

Therefore, from the analysis of the optimality condition we concluded that the optimal matrix $M_\ast\in G_\theta$ verifies the condition \eqref{pro:optimo}. We remark that this condition does not imply that the optimal matrix $M_\ast$ is the isotropic matrix $\lambda_\theta I_d$; the important conclusion of \eqref{pro:optimo} is that $\lambda_\theta$ is an eigenvalue of $M_\ast$ and $\nabla u$ is an associated eigenvector, where $u$ is the solution of the state equation. In particular, this implies that the optimal value from the $(O_r)$ is attained on the simpler problem
$$(O)\qquad\min I(\theta)=\int_\Omega\int_Dj(x,\omega,u(x,\omega))dx\,dP(\omega)$$
subject to,
$$\begin{array}{c}
\theta\in L^\infty(D;[0,1]),\\
-\dive\big(\lambda_\theta(x)\nabla u(x)\big)=f\quad\textrm{ in }D,\\
u=u_0\quad\textrm{on }\partial D,\\
\int_\Omega\theta(x)\,dx=L
\end{array}$$
Finally, the proof of Theorem \ref{exist} reduces to the fact that for $\theta\in L^\infty(D;[0,1])$ one has that
$\lambda_\theta\in L^\infty(D;[\alpha,\beta])$, in particular it is enough to take $a=\lambda_\theta$ to deduce the existence of optimal solution of our original problem $(O)$.

\section{Numerical analysis of the optimal design problem}

We approach in this section the numerical resolution of the problem
$(O)$ for the compliance case for which $j(x,\omega,u)=u(x)f(x,\omega)$ with Dirichlet boundary condition, for which the cost can be written as
$$\int_\Omega\int_Da(x)|\nabla u(x,\omega)|^2\,dx\,dP(\omega).$$
We first describe an algorithm of minimization and then present some numerical
experiments. We treat the problem
$$(O)\qquad\min I(a)=\int_D\int_\Omega u(x)(f(x)+\xi(x,\omega))\,dP(\omega)\,dx$$
subject to,
\begin{equation}\label{eq:estado_relaxato}
\begin{array}{c}
a\in L^\infty(D),\textrm{ with }\alpha\le a\le\beta,\\
-\dive\big(a(x)\nabla u\big)=f+\xi\quad\textrm{in }D,\textrm{ $P$-a.e. }\omega\in\Omega\\
u=0\quad\textrm{on }\partial D,\\
\int_Da(x)\,dx=m
\end{array}
\end{equation}
where $\xi$ is a random variable $\xi=\xi(x,\omega)$. Similar computations are also made for the energy case $j(x,\omega,u)=-u(x)f(x,\omega)$.

\subsection{Algorithm of minimization}

We present the resolution of the optimal design problem $(O)$ using a gradient descent method. In this respect, we compute the first variation of the cost function with respect to $a$.

For any $\eta\in\R^+$, $\eta\ll 1$, and any $\tilde{a}\in L^\infty(D)$, we associate to the perturbation $a^{\eta}=a+\eta\tilde{a}$ of $a$ the derivative of $I$ with respect to $a$ in the direction $\tilde{a}$ as follows:
\begin{equation}
\frac{\partial I(a)}{\partial a}\cdot \tilde{a}
=\lim_{\eta\rightarrow 0} \frac{I(a+\eta \tilde{a})-I(a)}{\eta}.
\nonumber
\end{equation}

\begin{teo}\label{thderivative}
The first derivative of $I$ with respect to $a$ in any direction $\tilde{a}$ exists and takes the form
\begin{equation}\label{deriveE}
\frac{\partial I(a)}{\partial a}\cdot\tilde{a}=
-\int_D\tilde{a}\Big(\int_\Omega\nabla u\nabla p\,dP(\omega)\Big)\,dx,
\end{equation}
where $u$ is the solution of \eqref{eq:estado_relaxato} and $p$ is the solution in $H_0^1(D)$ of the adjoint problem
\begin{equation}\label{eq:estado_adjunto}
\left\{\begin{array}{ll}
-\dive\big(a(x)\nabla p\big)=f+\xi&\textrm{in $D$,\quad$P$-a.e. }\omega\in\Omega\\
\displaystyle p=0&\textrm{on }\partial D.
\end{array}\right.
\end{equation}
\end{teo}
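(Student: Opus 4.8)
The plan is to differentiate the cost $I(a)=\int_\Omega\int_D u_a(x,\omega)\big(f(x)+\xi(x,\omega)\big)\,dx\,dP(\omega)$ by first differentiating the state $u_a$ with respect to $a$ and then passing the derivative under the integrals. Write $u^\eta=u_{a+\eta\tilde a}$, so that $u^\eta$ solves $-\dive\big((a+\eta\tilde a)\nabla u^\eta\big)=f+\xi$ with zero boundary data, $P$-a.e.\ $\omega$. First I would establish that, for $\eta$ small (say $|\eta|\le\eta_0$ with $\alpha+\eta_0\|\tilde a\|_\infty\le$ the coercivity constant staying positive), the solutions $u^\eta$ are bounded in $H^1_0(D)$ uniformly in $\eta$ and $\omega$ by the Lax--Milgram estimate $\|u^\eta(\cdot,\omega)\|_{H^1_0}\le C\|f(\cdot,\omega)+\xi(\cdot,\omega)\|_{H^{-1}}$, and then that $\dot u:=\lim_{\eta\to0}(u^\eta-u_a)/\eta$ exists strongly in $H^1_0(D)$ and is the unique solution of the linearized equation
\begin{equation}\label{eq:linearized}
-\dive\big(a\nabla\dot u\big)=\dive\big(\tilde a\nabla u_a\big)\quad\textrm{in }D,\qquad\dot u=0\textrm{ on }\partial D.
\end{equation}
This is routine: subtract the equations for $u^\eta$ and $u_a$, divide by $\eta$, test with $w^\eta=(u^\eta-u_a)/\eta$ to get $\int_D a\nabla w^\eta\cdot\nabla w^\eta=-\int_D\tilde a\nabla u^\eta\cdot\nabla w^\eta$, deduce a uniform bound on $w^\eta$, extract a weak limit, and identify it via the (strictly monotone, hence uniquely solvable) limit problem \eqref{eq:linearized}; strong convergence then follows from the energy identity.

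Next I would compute the derivative of $I$ formally and justify the interchange of limit and integration. Since $j(x,\omega,u)=u(f+\xi)$ is affine in $u$, we have
\begin{equation}\nonumber
\frac{I(a+\eta\tilde a)-I(a)}{\eta}=\int_\Omega\int_D\frac{u^\eta-u_a}{\eta}\,(f+\xi)\,dx\,dP(\omega),
\end{equation}
and as $\eta\to0$ the inner integral converges to $\int_D\dot u\,(f+\xi)\,dx$ for $P$-a.e.\ $\omega$ because $w^\eta\to\dot u$ in $H^1_0(D)\hookrightarrow L^2(D)$ and $f+\xi\in L^2(D)$. To pass the limit through $\int_\Omega\,dP$ I would invoke dominated convergence, using the uniform bound $\big|\int_D w^\eta(f+\xi)\,dx\big|\le C\|f(\cdot,\omega)+\xi(\cdot,\omega)\|_{H^{-1}}^2$, which is in $L^1(\Omega)$ since $f+\xi$ is a random map into $L^2(D)$ (hence into $H^{-1}(D)$) with $P$-integrable squared norm — this integrability is implicitly required by the standing assumption $\Lambda\in L^1(D\times\Omega)$ and the well-posedness of $F$. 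Thus
\begin{equation}\nonumber
\frac{\partial I(a)}{\partial a}\cdot\tilde a=\int_\Omega\int_D\dot u\,(f+\xi)\,dx\,dP(\omega).
\end{equation}

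Finally I would rewrite this using the adjoint state $p$ defined by \eqref{eq:estado_adjunto}. For fixed $\omega$, take $p(\cdot,\omega)\in H^1_0(D)$ solving $-\dive\big(a\nabla p\big)=f+\xi$; note that since $\partial j/\partial u=f+\xi$ this is exactly the adjoint equation \eqref{eq:coestado_relax_matrix} specialized to the compliance, so in fact $p=u_a$, but I would keep the notation $p$ for clarity. Using $\dot u$ as a test function in the weak form of \eqref{eq:estado_adjunto} gives $\int_D a\nabla p\cdot\nabla\dot u\,dx=\int_D(f+\xi)\dot u\,dx$, while using $p$ as a test function in the weak form of \eqref{eq:linearized} gives $\int_D a\nabla\dot u\cdot\nabla p\,dx=-\int_D\tilde a\nabla u_a\cdot\nabla p\,dx$. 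Equating the two left-hand sides yields $\int_D(f+\xi)\dot u\,dx=-\int_D\tilde a\nabla u_a\cdot\nabla p\,dx$ for $P$-a.e.\ $\omega$; integrating in $\omega$ produces \eqref{deriveE}. The only genuinely delicate point is the measurability and integrability bookkeeping in $\omega$ needed to differentiate under $\int_\Omega\,dP$ — establishing that $\omega\mapsto w^\eta(\cdot,\omega)$ and $\omega\mapsto\dot u(\cdot,\omega)$ are strongly measurable $H^1_0(D)$-valued maps and that the relevant $L^1(\Omega)$ domination holds — everything else is a standard Céa-type adjoint computation.
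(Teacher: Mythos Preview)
Your argument is correct, but it follows a different route from the paper. The paper proceeds via the Lagrangian formalism: it introduces
\[
\LL(a,\phi,\psi)=\int_D\int_\Omega\phi(f+\xi)\,dP\,dx+\int_D\int_\Omega\big[\dive(a\nabla\phi)+f+\xi\big]\psi\,dx\,dP,
\]
observes that $\partial_\psi\LL=0$ recovers the state equation, $\partial_\phi\LL=0$ yields the adjoint problem, and then reads off \eqref{deriveE} from $\partial_a\LL$ together with the identity $I(a)=\LL(a,u,p)$. You instead differentiate the solution map directly: you prove existence and strong $H^1_0$-convergence of the difference quotients $(u^\eta-u_a)/\eta$ to the linearized state $\dot u$, justify passing the limit through $\int_\Omega dP$ by dominated convergence, and only then invoke the adjoint $p$ to transform $\int(f+\xi)\dot u$ into $-\int\tilde a\,\nabla u\cdot\nabla p$ via two test-function identities. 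The Lagrangian approach is shorter and is the standard device in optimal control, but as written in the paper it is essentially formal and does not actually establish that the G\^ateaux limit exists; your approach supplies exactly that missing analysis (uniform bounds, identification of the limit, the $L^1(\Omega)$ domination), at the cost of more work. Both lead to the same adjoint computation in the end.
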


\begin{proof}We introduce the Lagrangian
\begin{eqnarray}
\LL(a,\phi,\psi)=\int_D\int_\Omega\phi(x,\omega)(f(x)+\xi(x,\omega))\,dP(\omega)\,dx+\nonumber\\
\int_D\int_\Omega\Big[\dive\big(a(x)\nabla\phi(x,\omega)\big)+f(x)+\xi(x,\omega)\Big]\psi\,dx\,dP(\omega)
\nonumber
\end{eqnarray}
for any $a\in L^\infty(D;[\alpha,\beta])$ and $\phi,\psi\in H^1_0(D)$ $P$-a.e. $\omega\in\Omega$. We compute the partial derivative for any variable.

It is clear that $\big\langle\frac{\partial}{\partial\psi}\LL(a,\phi,\psi); \psi_1\big\rangle=0$ taking $\phi=u$ solution of \eqref{eq:estado_relaxato}. We then determine the solution $p$ so that, for all $\phi_1\in H^1(D)$, we have
$$\Big\langle\frac{\partial }{\partial\phi}\LL( a,\phi,p);
\phi_1\Big\rangle=0,$$
which leads to the formulation of the adjoint problem \eqref{eq:estado_adjunto}.

Finally, it is easy to compute
\begin{equation}\label{derivetotala}
\Big\langle\frac{\partial}{\partial a}\LL(a,u,p);\tilde{a}\Big\rangle=
-\int_D\tilde{a}\Big(\int_\Omega\nabla u \nabla p\,dP(\omega)\Big)\,dx.
\end{equation}
Next, writing that $I(a)=\LL(a,u,p)$, we obtain \eqref{deriveE} from \eqref{derivetotala}.\end{proof}

In order to take into account the volume constraint on $a$, we introduce the Lagrange multiplier $\gamma\in\R$ and the functional
$$I_\gamma(a)= I(a)+\gamma\int_Da(x)\,dx.$$
Using Theorem \ref{thderivative}, we then obtain easily that the first derivative of $I_\gamma$ is
$$\frac{\partial I_\gamma(a)}{\partial a}\cdot\tilde{a}
=-\int_D\tilde{a}\Big(\int_\Omega\nabla u \nabla p
\,dP(\omega)\Big)\,dx+\gamma\int_D\tilde{a}(x)\,dx,$$
which leads us to define the following descent direction:
\begin{equation}\label{descentdirection_a}
\tilde{a}(x)=\Big(\int_\Omega\nabla u \nabla p
\,dP(\omega)-\gamma\Big)\quad\forall x\in D.
\end{equation}
In this way, for any function $\eta\in L^\infty(D,\R^+)$ with $\|\eta\|_{L^\infty(D)}$ small enough, we have $I_\gamma(a+\eta\tilde{a})\le I_\gamma(a)$. The multiplier $\gamma$ is then determined so that, for any function
$\eta\in L^\infty(D,\R^+)$, $\|a+\eta\tilde{a}\|_{L^1(D)}=m$,
leading to
\begin{equation}\label{multiplier_a}
\gamma=\frac{\displaystyle\Big(\int_Da(x)dx-m\Big)+
\int_D\eta\int_\Omega\nabla u\nabla p\,dP(\omega)\,dx}
{\displaystyle\int_D\eta(x)\,dx}
\end{equation}
where the function $\eta$ is chosen such that $a+\eta\tilde{a}\in[\alpha,\beta]$ for all $x \in D$. A simple and efficient choice consists of taking $\eta(x)=\varepsilon (a(x)-\alpha)(\beta-a(x))$ for all $x\in D$ with $\varepsilon$ small and positive.

Finally we show the descent algorithm to solve numerically the optimization problem $(O)$.

We consider $0<\alpha\le\beta$, $m\in(\alpha|D|,\beta|D|)$ and $\varepsilon<1$, $\varepsilon_1\ll1$ data of the problem, the structure of the algorithm is as follows.
\begin{itemize}
\item Initialization of the density $a^0\in L^\infty(D;[\alpha,\beta])$;
\item for $k\ge0$, iteration until convergence (i.e., $|I_\gamma(a^{k+1})-I_\gamma(a^k)|\le\varepsilon_1|I_\gamma(a^0)|$) as follows:

\begin{itemize}
\item compute the solution $u_{a^k}$ of \eqref{eq:estado_relaxato} and then the solution $p_{a^k}$ of \eqref{eq:estado_adjunto}, both corresponding to $a=a^k$;
\item compute the descent direction $\tilde{a}$ defined by \eqref{descentdirection_a}, where the multiplier $\gamma$ is defined by \eqref{multiplier_a};
\item update the density $a^k$ in $D$:
$$a^{k+1}= a^k+\varepsilon(a^k-\alpha)(\beta-a^k)\tilde{a}^k,$$
with $\varepsilon\in\R^+$ small enough to ensure the decrease of the cost function, $a^{k+1}\in L^\infty(D,[\alpha,\beta])$.
\end{itemize}
\end{itemize}

\subsection{Numerical experiments}

In this section we implement the gradient descent algorithm explained in the previous subsection. These sort of problems have been studied by other authors (\cite{Allaire}, \cite{sigmund}, \cite{Casado_numeric}, \cite{Pgal_first_article}), and we will show the numerical results according with previous numerical simulations of the previous authors.

We solve the problem $(O)$ on the square domain $D=(0,1)^2$ for two phases $\alpha=1$ and $\beta=2$, the determinist part of the right-hand side of the state equation $f\equiv 1$ and we consider the volume constraint $m=\frac{\alpha+\beta}{2}=1.5$, i.e. we can use the same amount of $\alpha$ or $\beta$ mass. In order to simplify the numerical computations we choose the random variable $\xi$ with a discrete distribution of probability. We consider two different cases for $\xi$:
\begin{itemize}
\item\textbf{Case 1:} $\xi(x)=\pm\chi_{D_0}$ where $D_0=[\frac{1}{4},\frac{3}{4}]^2\subset D$\\
\item \textbf{Case 2:} $\xi(x)=\pm\chi_{D_1}$ where $D_1=D\setminus D_0$
\end{itemize}
and in both cases $P(\{\xi=\chi\})=P(\{\xi=-\chi\})=\frac{1}{2}$. It could be possible that the algorithm fall down into local minima of $I$, for this reason, we consider a constant initialization $a_0=m$, without any privilege for the optimal localization.

In the figures below we represent the corresponding optimal mass distribution $a$. We show numerical results for full deterministic case and the two random cases described above, all for the compliance and energy minimization. The result are qualitatively different for any case.

\begin{figure}[h!]
\begin{minipage}[t]{6.1cm}
\centering
\includegraphics[scale=0.53]{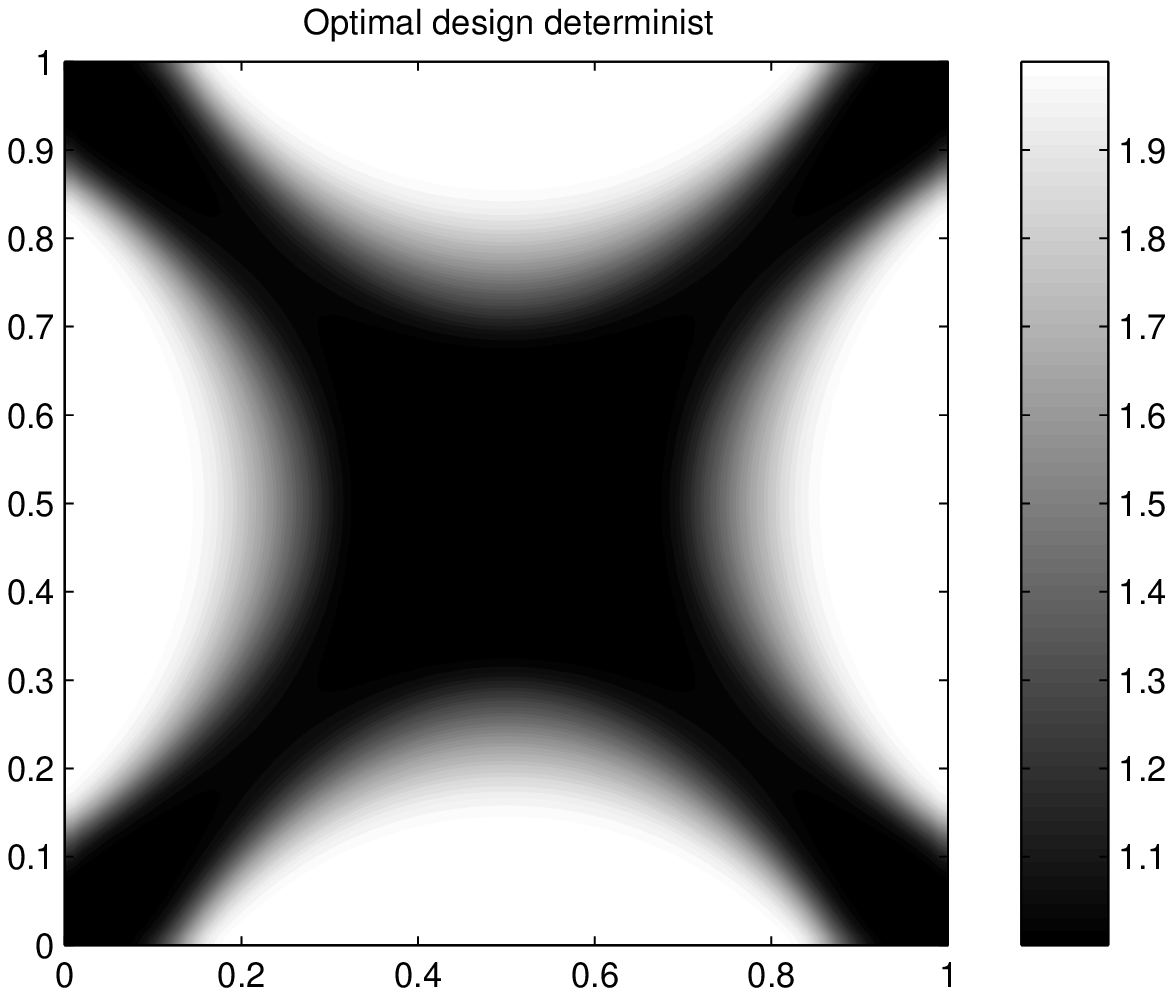}
\end{minipage}\hspace{0.1cm}
\begin{minipage}[t]{6.1cm}
\centering
\includegraphics[scale=0.53]{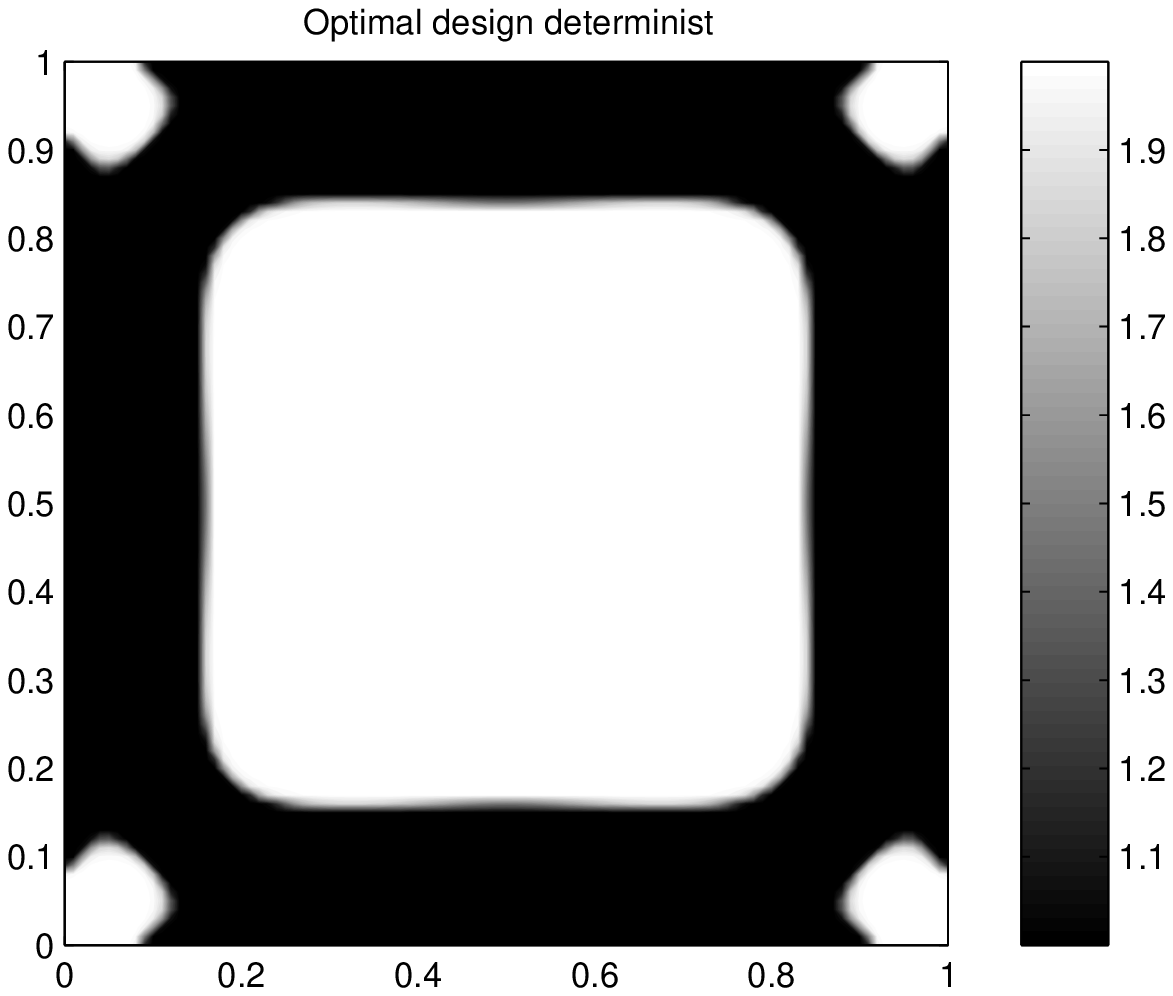}
\end{minipage}
\caption{Optimal distribution for the full deterministic case.
Left:  Compliance minimization. Right: Energy minimization.}
\label{Fig:deterministico}
\end{figure}

\begin{figure}[h!]
\begin{minipage}[t]{6.1cm}
\centering
\includegraphics[scale=0.53]{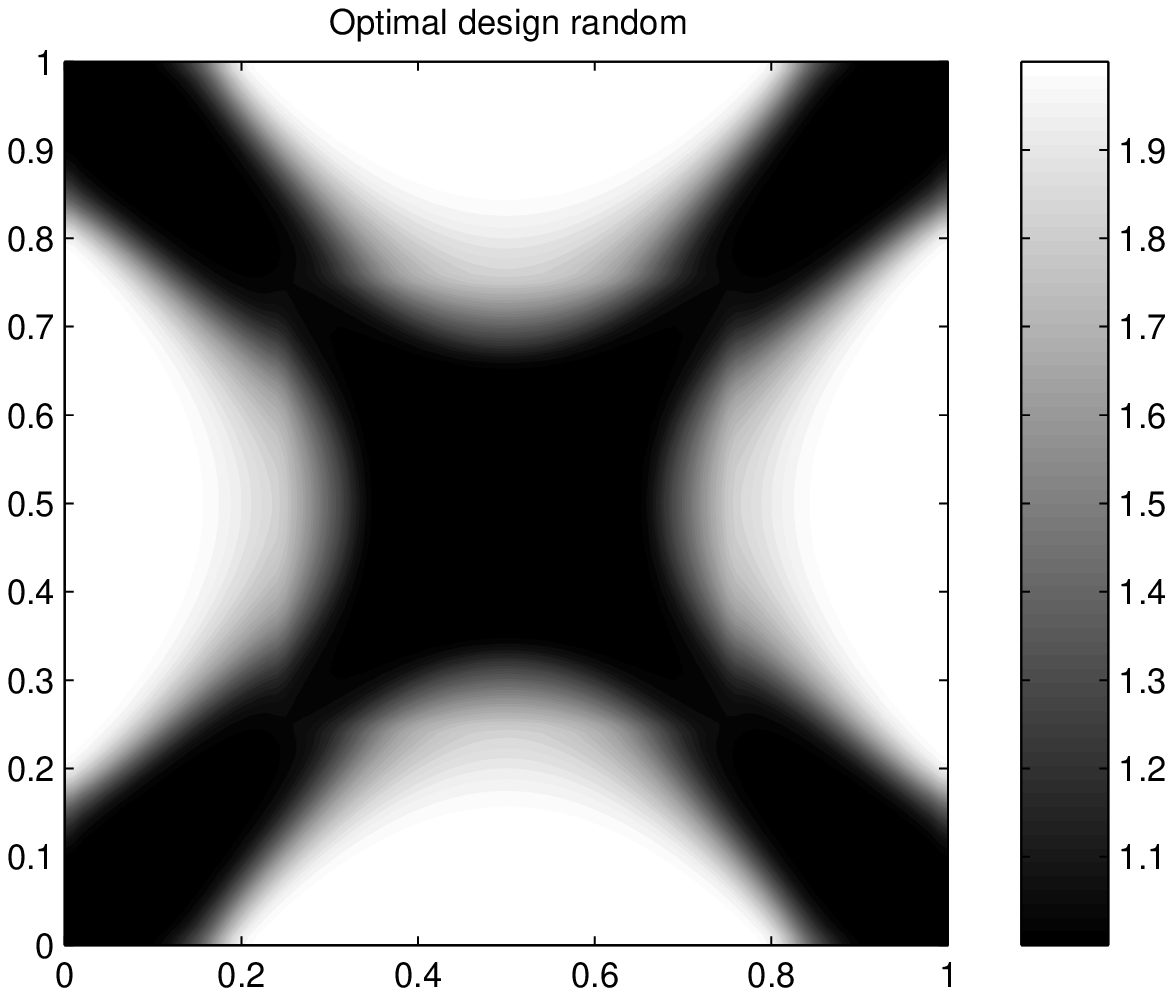}
\end{minipage}
\hspace{0.1cm}
\begin{minipage}[t]{6.1cm}
\centering
\includegraphics[scale=0.53]{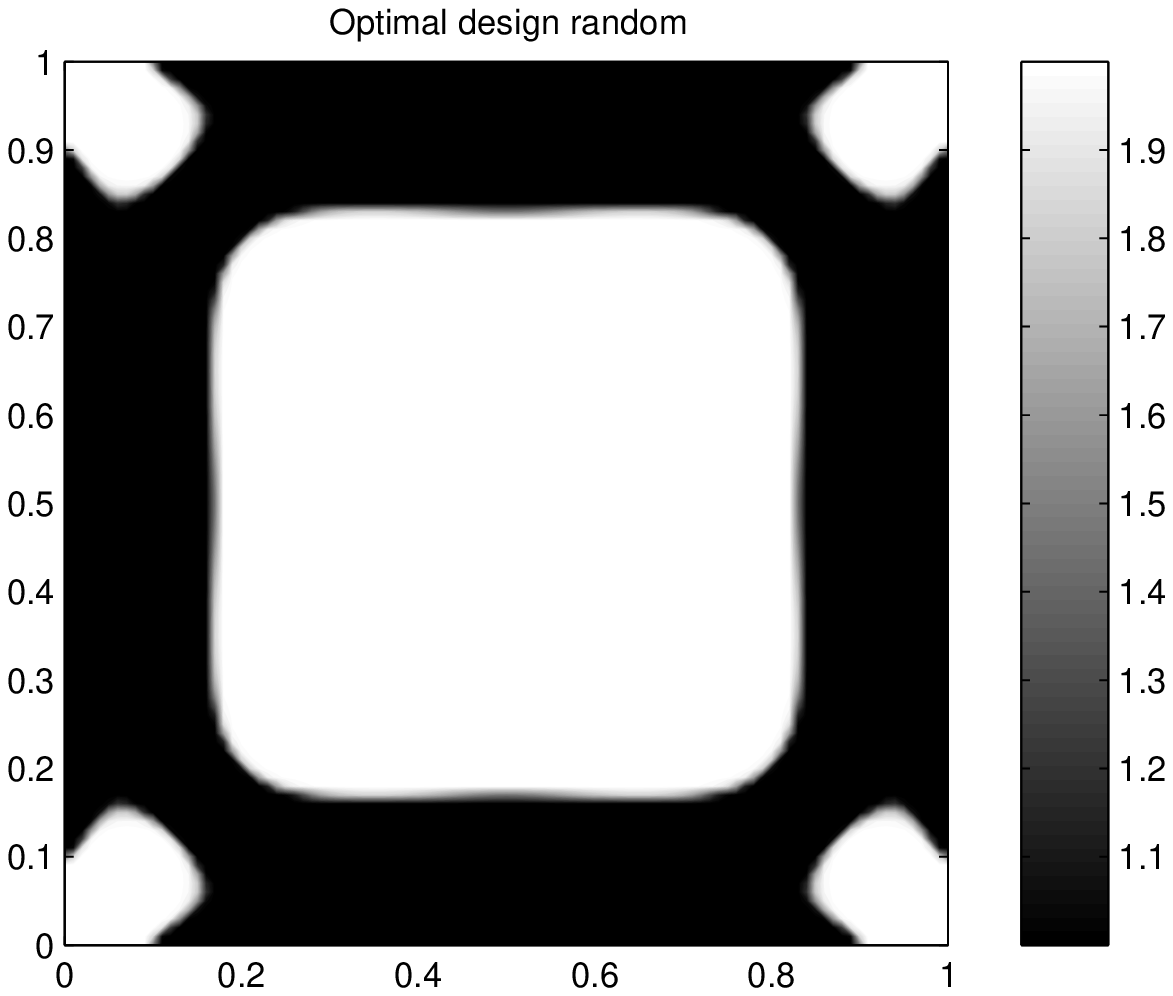}
\end{minipage}
\caption{Optimal distribution for the random Case 1. Left:
Compliance minimization. Right: Energy minimization}
\label{Fig:Estocas_case1}
\end{figure}

\begin{figure}[h!]
\begin{minipage}[t]{6.1cm}
\centering
\includegraphics[scale=0.53]{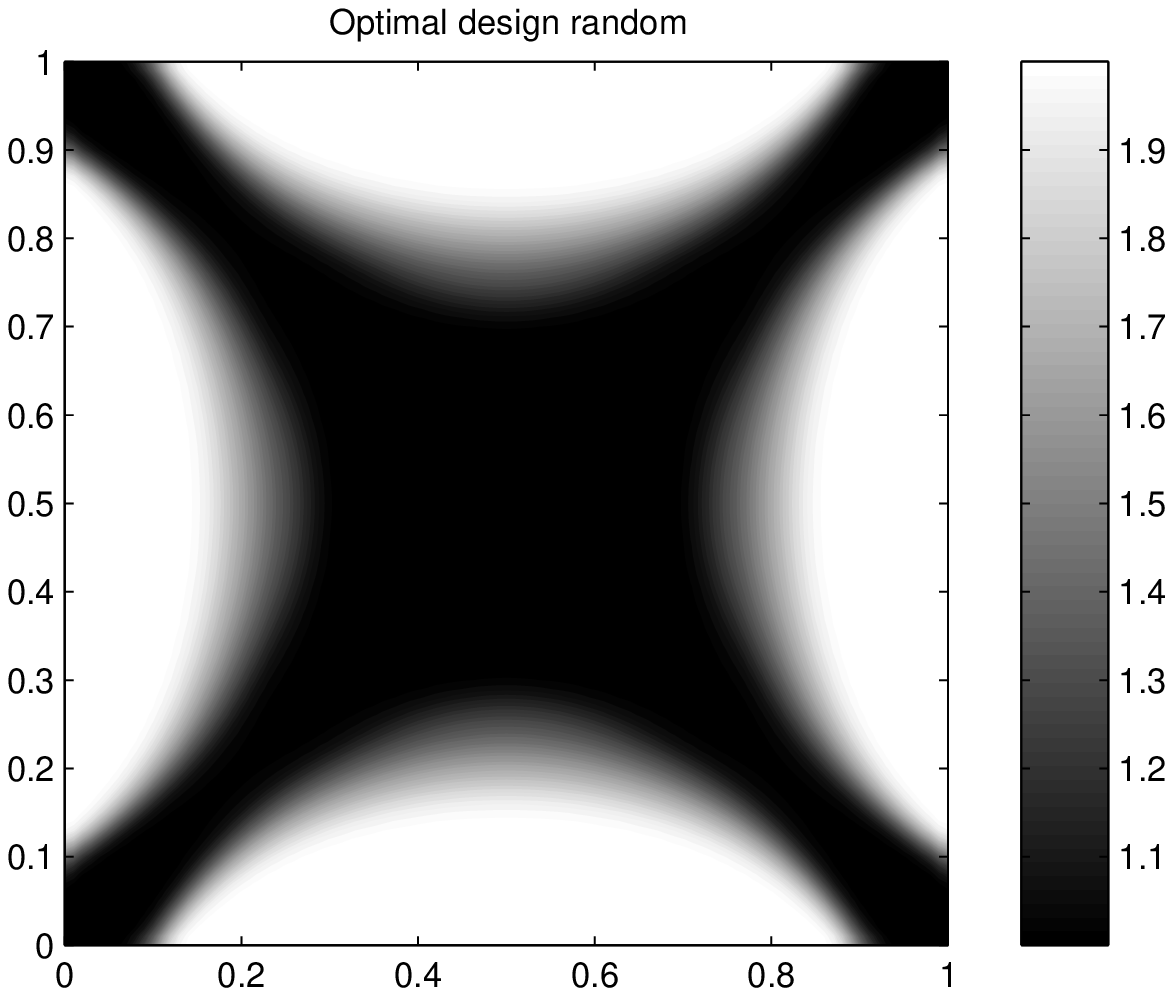}
\end{minipage}
\hspace{0.1cm}
\begin{minipage}[t]{6.1cm}
\centering
\includegraphics[scale=0.53]{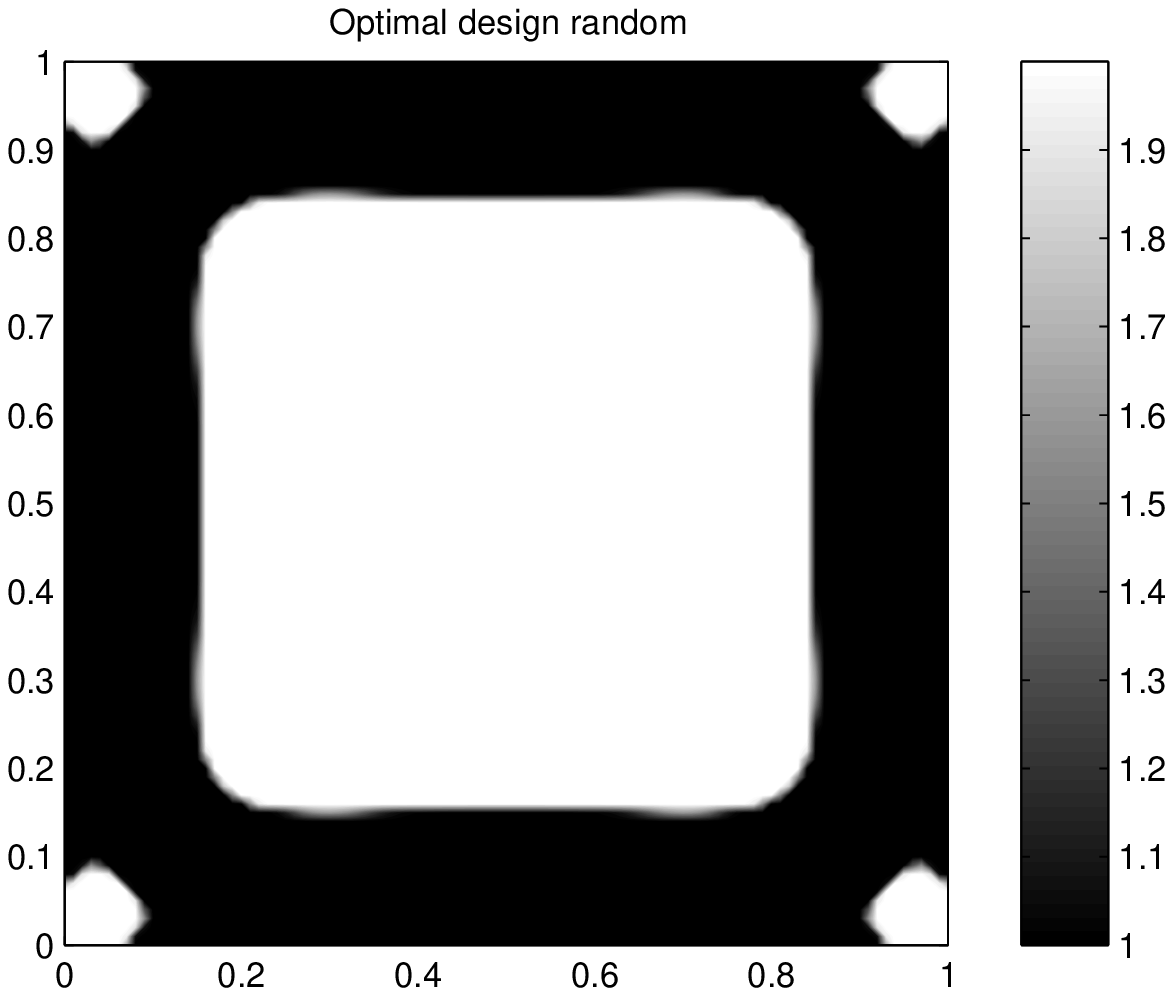}
\end{minipage}
\caption{Optimal distribution for the random Case 2. Left:
Compliance minimization. Right: Energy minimization}
\label{Fig:Estocas_case2}
\end{figure}

With respect to the compliance minimization, we observe that the limit densities follow a similar distribution,  where the smaller amount of mass is placed as a cross-shape. Taking as reference the picture of Figure \ref{Fig:deterministico}  (the deterministic case) we observe different densities for the two random cases. For the Case 1, where the random perturbation is at the middle of the square a bigger amount of mass (the white color at the pictures) is distributed for this place and the thickness of the cross is bigger at the corners (in order to save the volume constraint) (see Figure \ref{Fig:Estocas_case1}). For the Case 2 the effect is the inverse, in this case the random perturbation is near the boundary; the optimal distribution consists in a smaller amount of mass in the middle and a bigger concentration near to the boundary with a smaller thickness of the cross (see Figure \ref{Fig:Estocas_case2}).

For the energy minimization, the optimal distribution is fully different and it is in accord with previous analysis (see \cite{Allaire}, \cite{Donoso}). We take again the deterministic case as the reference design; for this case the simulations show that a bigger amount of mass is placed at the corners and at a
square in the middle of the domain of design. For the Case 1 the optimal distribution is very similar to the deterministic one, and the changes correspond with placing a bit more of mass at the corners of the domain. For the Case 2, the effect is the reverse, in this case there are less mass at the corners and almost all the mass is placed at a square in the middle of the
domain.

Finally, and in conclusion the numerical experiments indicate that under random forces the optimal distribution consists in placing a bigger amount of mass where this random force acts in, in order to take into account the possibility for the load to have a random variation.

\bigskip
{\bf Acknowledgements.} This paper was written during a visit of Faustino Maestre in Pisa with a fellowship of the Spanish government.

\end{document}